\documentclass[12pt,dvips]{amsart}
\usepackage{amsfonts, amssymb, latexsym, epsfig, amscd}

\setlength{\oddsidemargin}{0in}
\setlength{\evensidemargin}{0in}
\setlength{\marginparwidth}{0in}
\setlength{\marginparsep}{0in}
\setlength{\marginparpush}{0in}
\setlength{\topmargin}{0in}
\setlength{\headheight}{0pt}
\setlength{\headsep}{0pt}
\setlength{\footskip}{.3in}
\setlength{\textheight}{9.2in}
\setlength{\textwidth}{6.5in}
\setlength{\parskip}{4pt}

\newtheorem{Theorem}{Theorem}[section]
\newtheorem{Proposition}[Theorem]{Proposition}
\newtheorem{Lemma}[Theorem]{Lemma}

\newtheorem{Definition-Proposition}[Theorem]{Definition-Theorem}
\newtheorem{Main Conjecture}[Theorem]{Main Conjecture}

\newtheorem{Conjecture}[Theorem]{Conjecture}

\theoremstyle{remark}
\newtheorem{Example}[Theorem]{Example}

\newcommand{\excise}[1]{}



\newcommand\Groth{{\mathfrak G}}

\theoremstyle{plain}






\newcommand{\cellsize}{12}
\newlength{\cellsz} \setlength{\cellsz}{\cellsize\unitlength}
\newsavebox{\cell}
\sbox{\cell}{\begin{picture}(\cellsize,\cellsize)
\put(0,0){\line(1,0){\cellsize}}
\put(0,0){\line(0,1){\cellsize}}
\put(\cellsize,0){\line(0,1){\cellsize}}
\put(0,\cellsize){\line(1,0){\cellsize}}
\end{picture}}
\newcommand\cellify[1]{\def\thearg{#1}\def\nothing{}%
\ifx\thearg\nothing
\vrule width0pt height\cellsz depth0pt\else
\hbox to 0pt{\usebox{\cell} \hss}\fi%
\vbox to \cellsz{
\vss
\hbox to \cellsz{\hss$#1$\hss}
\vss}}
\newcommand\tableau[1]{\vtop{\let\\\cr
\baselineskip -16000pt \lineskiplimit 16000pt \lineskip 0pt
\ialign{&\cellify{##}\cr#1\crcr}}}
%

\newcommand{\kellsize}{10}
\newlength{\kellsz} \setlength{\kellsz}{\kellsize\unitlength}
\newsavebox{\kell}
\sbox{\kell}{\begin{picture}(\kellsize,\kellsize)
\put(0,0){\line(1,0){\kellsize}}
\put(0,0){\line(0,1){\kellsize}}
\put(\kellsize,0){\line(0,1){\kellsize}}
\put(0,\kellsize){\line(1,0){\kellsize}}
\end{picture}}
\newcommand\kellify[1]{\def\thearg{#1}\def\nothing{}%
\ifx\thearg\nothing
\vrule width0pt height\kellsz depth0pt\else
\hbox to 0pt{\usebox{\kell} \hss}\fi%
\vbox to \kellsz{
\vss
\hbox to \kellsz{\hss$#1$\hss}
\vss}}
\newcommand\ktableau[1]{\vtop{\let\\\cr
\baselineskip -16000pt \lineskiplimit 16000pt \lineskip 0pt
\ialign{&\kellify{##}\cr#1\crcr}}}

\begin{document}
\pagestyle{plain}
\title{Combinatorial rules for three bases of polynomials}

\author{Colleen Ross}
\address{Department of Industrial Engineering and Management Sciences\\
Northwestern University\\ Evanston, IL 60208}
\email{ColleenRoss2012@u.northwestern.edu}
\author{Alexander Yong}
\address{Department of Mathematics\\
University of Illinois at Urbana-Champaign\\
Urbana, IL 61801}
\email{ayong@illinois.edu}



\date{February 1, 2013}
\maketitle

\section{Introduction}

We present combinatorial rules (one theorem and two conjectures)
concerning three bases of ${\sf Pol}={\mathbb Z}[x_1,x_2,\ldots]$. 

Consider a basic question (studied for example in \cite{Poly}):
\begin{quotation}
How does one
 lift properties of the ring $\Lambda$ of symmetric functions (and its Schur basis)
to the entirety of ${\sf Pol}$?
\end{quotation}
The bases below lift the Schur polynomials. However, one wishes to analogize
the relationship in $\Lambda$ between rules for Schur polynomials and  
Littlewood-Richardson rules. For these bases, no 
rule has yet provided a parallel, explaining a desire for alternative forms.

First, we prove a ``splitting'' rule for the basis of
\emph{key polynomials} $\{\kappa_{\alpha} | \alpha\in {\mathbb Z}_{\geq 0}^{\infty}\}$, thereby establishing a new positivity theorem about them. This family was introduced by \cite{Demazure} and first studied combinatorially in \cite{LS2,LS1}. Combinatorial rules for their monomial
expansion are known, see, e.g., \cite{LS2, LS1, Reiner.Shimozono, HHL}. Our rule refines 
\cite[Theorem 5(1)]{Reiner.Shimozono} and is compatible with the splitting rule \cite[Corollary~3]{BKTY} for
the basis of \emph{Schubert polynomials} $\{{\mathfrak S}_w |w\in S_{\infty}\}$.

Second, we investigate a basis
$\{\Omega_{\alpha}|\alpha\in {\mathbb Z}_{\geq 0}^{\infty}\}$  
defined by \cite{Lascoux:trans}
that deforms the key basis. By extending the \emph{Kohnert moves} of \cite{Kohnert} we conjecturally
give the first combinatorial rule for the $\Omega$-polynomials.

Third, in \cite{Kohnert}, the Kohnert moves were used to conjecture the first combinatorial
rule for Schubert polynomials (a proof was later presented in \cite{Winkel}). Similarly, we use the extended Kohnert moves
to give a conjecture for the basis of \emph{Grothendieck polynomials} $\{{\mathfrak G}_w |w\in S_{\infty}\}$ \cite{LasSch2}. 
This rule appears significantly different than earlier (proved) rules, such as those in \cite{FK, Lascoux:trans, BKTY:II, LRS}. 

\subsection{Splitting key polynomials}

Let $S_{\infty}$ be the group of permutations of ${\mathbb N}$ with finitely many non-fixed points.
This acts on ${\sf Pol}$ by permuting the variables.
Let $s_i$ be the simple transposition interchanging $x_i$ and $x_{i+1}$. The {\bf divided difference operator} acts on ${\sf Pol}$ by
\[\partial_i=\frac{1-s_i}{x_i-x_{i+1}}.\]
Define the {\bf Demazure operator} by setting
\[\pi_i(f)=\partial_i(x_i \cdot f ), \mbox{ \ for $f\in {\sf Pol}$.}\]

For $\alpha=(\alpha_1,\alpha_2,\ldots)\in {\mathbb Z}_{\geq 0}^\infty$, the {\bf key polynomial} $\kappa_{\alpha}$ is
\[\kappa_{\alpha}=x_1^{\alpha_1}x_2^{\alpha_2}\cdots, \  \mbox{\ if $\alpha$ is weakly decreasing.}\]
Otherwise,
\[\kappa_{\alpha}=\pi_i(\kappa_{\widehat \alpha}) \mbox{\ where $\widehat\alpha=(\ldots,\alpha_{i+1},\alpha_i,\ldots)$ and
$\alpha_{i+1}>\alpha_{i}$.}\]
Since the leading term of $\kappa_{\alpha}$ is $x_1^{\alpha_1}x_2^{\alpha_2}\cdots$, the key polynomials
form a ${\mathbb Z}$-basis of ${\sf Pol}$.

The key polynomials lift the Schur polynomials: when
\begin{equation}
\label{eqn:kappaequalsschurcond}
\alpha=(\alpha_1,\alpha_2,\ldots,\alpha_t,0,0,0,\ldots), \mbox{\ where $\alpha_1\leq \alpha_2\leq\ldots\leq\alpha_t$, then}
\end{equation}
\begin{equation}
\label{eqn:kappaequalsschur}
\kappa_{\alpha}=s_{(\alpha_t,\cdots,\alpha_2,\alpha_1)}(x_1,\ldots,x_t).
\end{equation}

A {\bf descent} of $\alpha$ is an index $i$ such that $\alpha_{i}\geq 
\alpha_{i+1}$; a {\bf strict descent} is an index
$i$ such that $\alpha_{i}>\alpha_{i+1}$. Fix descents 
$d_1<d_2<\ldots<d_k$ of $\alpha$ containing all 
strict descents of $\alpha$. Since $\pi_i$ symmetrizes $\{x_i,x_{i+1}\}$, $\kappa_{\alpha}$ is separately symmetric in each collection: 
\[X_1 = \{x_1,x_2,\ldots,x_{d_1}\},
X_2 =  \{x_{d_1 +1}, x_{d_1 +2},\ldots,x_{d_2}\}, \ldots, 
X_k =  \{x_{d_{k-1}+1},x_{d_{k-1}+2},\ldots,x_{d_k}\}.\]
(The variables $x_{d_k+1},x_{d_{k}+2},\cdots$ do not appear in $\kappa_{\alpha}$.)
Therefore, uniquely:
\begin{equation}
\label{eqn:theexpansion}
\kappa_{\alpha}(X)=\sum_{\lambda^1,\ldots,\lambda^k} {\mathcal E}^{\alpha}_{\lambda^1,\ldots,\lambda^k} \ s_{\lambda^1}(X_1)\cdots
s_{\lambda^k}(X_k),
\end{equation}
where each $\lambda^i$ is a partition. \emph{A priori} one only knows ${\mathcal E}^{\alpha}_{\lambda^1,\ldots,\lambda^k}\in {\mathbb Z}$.

Given $\alpha\in {\mathbb Z}_{\geq 0}^{\infty}$, there is a unique
$w[\alpha]\in S_{\infty}$ such that ${\tt code}(w[\alpha])=\alpha$ 
(see, e.g., \cite[Proposition~2.1.2]{Manivel}). Here 
${\tt code}(w[\alpha])\in {\mathbb Z}^{\infty}_{\geq 0}$ counts the number of boxes in 
columns of ${\tt Rothe}(w[\alpha])$. We will need a special tableau
coming from \cite[Section~4]{Stanley}:

\noindent
{\sf The tableau} $T[\alpha]$: Given $w[\alpha]$, $i_1<i_2<\ldots<i_a$
in the first column of $T[\alpha]$ are given by having $i_j$ be the largest descent position smaller than $i_{j+1}$ in the permutation $ws_{i_a}s_{i_{a-1}}\cdots s_{i_{j+1}}$. The next column of $T[\alpha]$ is similarly determined, starting from $ws_{i_a}\cdots s_{i_1}$, etc.

An {\bf increasing tableau} $T$ of shape $\lambda$ is a filling with strictly increasing rows and columns. (In fact, $T[\alpha]$ is an increasing tableau.) Let ${\tt row}(T)$ be the reading word of $T$, obtained by reading the entries of $T$ along rows, from right to left, and from top to bottom. Let $\min (T)$ be the smallest label in $T$. Finally, given a reduced word ${\bf a}=a_1 a_2\ldots a_m$,
let ${\tt EGLS}({\bf a})$ be the output of the
\emph{Edelman-Greene correspondence} (see Section~2.1).

The following result shows ${\mathcal E}^{\alpha}_{\lambda^1,\ldots,\lambda^k}\in {\mathbb Z}_{\geq 0}$. It is analogous
to one on Schubert polynomials \cite[Corollary~3]{BKTY} (which our proof uses).

\begin{Theorem}
\label{claim:main}
The number ${\mathcal E}^{\alpha}_{\lambda^1,\ldots,\lambda^k}$ counts sequences of increasing tableaux
$(T_1,T_2,\ldots,T_k)$ where
\begin{itemize}
\item $T_i$ is of shape $\lambda^i$;
\item $\min T_1 > 0 , \min T_2 > d_1, \min T_3 > d_2, \ldots, \min T_k> d_{k-1}$; and
\item ${\tt row}(T_1)\cdot {\tt row}(T_2)\cdots {\tt row}(T_k)$ is a reduced word of $w[\alpha]$ such that\\
${\tt EGLS}({\tt row}(T_1)\cdot {\tt row}(T_2)\cdots {\tt row}(T_k))=T[\alpha]$.
\end{itemize}
\end{Theorem}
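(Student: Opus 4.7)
I would prove this by combining the Reiner-Shimozono monomial formula for $\kappa_\alpha$ from \cite[Theorem~5(1)]{Reiner.Shimozono} with the Schubert-polynomial splitting rule \cite[Corollary~3]{BKTY}, refined via the Edelman-Greene correspondence. The hypothesis that all strict descents of $\alpha$ lie in $\{d_1,\ldots,d_k\}$ forces the descent set of $w[\alpha]$ into the same range, so $\mathfrak{S}_{w[\alpha]}$ is separately symmetric in each block $X_i$ (and trivially in any further trailing block for variables beyond $x_{d_k}$). Applying \cite[Corollary~3]{BKTY} to $\mathfrak{S}_{w[\alpha]}$ produces a count of sequences of increasing tableaux $(T_1,\ldots,T_k,\ldots)$ satisfying shape, minimum, and reduced-word conditions formally identical to those of Theorem~\ref{claim:main}, but summed over \emph{all} possible ${\tt EGLS}$ insertion tableaux of the concatenated row-reading word.

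The heart of the argument is to stratify the BKTY count by the common ${\tt EGLS}$ insertion tableau $P$ and to identify the $P = T[\alpha]$ stratum with $\kappa_\alpha$. I would do this by expanding $\kappa_\alpha$ via Reiner-Shimozono as a sum over semistandard key tableaux flagged by $\alpha$, converting each such tableau to a pair (reduced word, compatible sequence) for $w[\alpha]$ via the standard bijection, and then partitioning the compatible sequence across the blocks $X_1,\ldots,X_k$ to obtain a tableau sequence $(T_1,\ldots,T_k)$ of the prescribed shapes and minima. The output of this procedure is exactly the type of datum enumerated by BKTY, and the crux is to verify that the Reiner-Shimozono flag condition on the key tableau is equivalent to the ${\tt EGLS}$ insertion tableau of the underlying reduced word being $T[\alpha]$.

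The main obstacle is this last equivalence: showing that Reiner-Shimozono's flagged reduced words for $w[\alpha]$ are precisely those reduced words whose ${\tt EGLS}$ output equals $T[\alpha]$. I expect this to follow by induction on $\ell(w[\alpha])$, matching Stanley's recursive descent-peeling construction of $T[\alpha]$ from \cite[Section~4]{Stanley} step-by-step with the Edelman-Greene bumping algorithm: the largest descent chosen at each stage of Stanley's recursion should correspond to the insertion producing the topmost entry of the rightmost not-yet-constructed column of $T[\alpha]$. Once this parallel is established, the Theorem follows by combining the BKTY count with the EG stratification.
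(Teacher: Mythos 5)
Your high-level strategy matches the paper's: you combine the BKTY splitting rule for $\mathfrak{S}_{w[\alpha]}$ with a Reiner--Shimozono formula for $\kappa_\alpha$, and you correctly see that the task is to cut out the $\kappa_\alpha$-stratum of the BKTY count by pinning it to the single insertion tableau $T[\alpha]$. However, the route you propose for the crux is genuinely different from the paper's and has a gap. The paper uses \cite[Theorem~5(1)]{Reiner.Shimozono} directly, which already expresses $\kappa_\alpha$ as a sum over compatible sequences $({\bf a},{\bf i})$ satisfying $\mathtt{EGLS}({\bf a})=T$ for a \emph{fixed} increasing $T$ with $\mathtt{content}(K^0_-(T))=\alpha$; the only thing to prove is that $T[\alpha]$ qualifies. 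The paper does this softly: $\mathfrak{S}_{w[\alpha]}=\kappa_\alpha+\text{(other keys)}$ by leading-term considerations, the Lascoux--Sch\"utzenberger key expansion of $\mathfrak{S}_{w[\alpha]}$ then produces a \emph{unique} $U[\alpha]$ with the correct nil-left-key content, and the fact that this key expansion refines Stanley's Schur expansion of $F_{w[\alpha]}$ identifies $U[\alpha]$ with Stanley's ``maximal'' witnessing tableau, which is $T[\alpha]$. You instead propose to start from a flagged-key-tableau formulation of Reiner--Shimozono, convert to compatible sequences, and then prove by induction on $\ell(w[\alpha])$ that the flag condition is equivalent to $\mathtt{EGLS}({\bf a})=T[\alpha]$ by matching Stanley's descent-peeling step-for-step against Edelman--Greene bumping. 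That is a plan, not a proof; the step-for-step matching of two quite different recursions is exactly the kind of thing that tends to derail, and you haven't indicated why the bumping should track the descent-peeling. The paper's characterization argument bypasses the need for any such algorithmic comparison.

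Two further points you leave implicit but would need to address. First, well-definedness: when you ``partition the compatible sequence across the blocks,'' the reason each resulting block tableau $T_j$ has $\min T_j > d_{j-1}$ is condition $(\mathrm{cs.3})$ ($i_j\leq a_j$), and this deserves a sentence. Second, and more importantly, you need the Coxeter--Knuth invariance of $\mathtt{EGLS}$ to make your stratification well defined: the BKTY tuples record $(T_1,\ldots,T_k)$, not ${\bf a}$, so you must know that $\mathtt{EGLS}({\bf a})$ depends only on the concatenation ${\tt row}(T_1)\cdots{\tt row}(T_k)$. The paper makes this explicit via ${\bf a}^{(j)}\sim{\tt row}(T_j)$ for each block, hence ${\bf a}\sim{\tt row}(T_1)\cdots{\tt row}(T_k)$, hence $\mathtt{EGLS}({\bf a})=T[\alpha]$ if and only if $\mathtt{EGLS}({\tt row}(T_1)\cdots{\tt row}(T_k))=T[\alpha]$. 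Without this observation your ``stratify by the common $\mathtt{EGLS}$ insertion tableau'' is not yet a bijection between the RS count and a subset of BKTY tuples.
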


When $d_j=j$ for all $j\geq 1$, Theorem~\ref{claim:main} specializes to an instance
of the monomial expansion formula \cite[Theorem~5(1)]{Reiner.Shimozono} for $\kappa_{\alpha}$ (restated as
Theorem~\ref{thm:RS} below). Also, when
(\ref{eqn:kappaequalsschurcond}) holds, $k=1$, $d_1=t$ and thus
Theorem~\ref{claim:main} gives (\ref{eqn:kappaequalsschur}).

\begin{Example}
\label{exa:ispos}
The (strict) descents of $\alpha=(1,3,0,2,2,1)$ are $d_1=2, d_2=5$, and
\begin{multline}\nonumber
\kappa_{1,3,0,2,2,1}=s_{3,2}(x_1,x_2)s_{2,1,1}(x_3,x_4,x_5)+s_{3,2}(x_1,x_2)s_{2,1}(x_3,x_4,x_5)s_{1}(x_6)\\ \nonumber
+s_{3,1}(x_1,x_2)s_{2,2}(x_3,x_4,x_5)s_{1}(x_6)+s_{3,1}(x_1,x_2)s_{2,2,1}(x_3,x_4,x_5).\nonumber
\end{multline}
exhibits the claimed non-negativity of Theorem~\ref{claim:main}. 

Also, $w[\alpha]=2516743$ (one line notation)
and $T[\alpha]=\tableau{1&3&4\\2&5\\4&6\\5\\6}$.
Thus,
${\mathcal E}_{(3,2),(2,1,1),\emptyset}^{(1,3,0,2,2,1)}=
{\mathcal E}_{(3,2),(2,1),(1)}^{(1,3,0,2,2,1)}=
{\mathcal E}_{(3,1),(2,2),(1)}^{(1,3,0,2,2,1)}=
{\mathcal E}_{(3,1),(2,2,1),\emptyset}^{(1,3,0,2,2,1)}=1$ are respectively witnessed by
\[\left(\tableau{1&3&4\\2&5},\tableau{4&6\\5\\6},\emptyset\right),
\ \left(\tableau{1&3&4\\2&5},\tableau{4&6\\5},\tableau{6}\right), \ 
\left(\tableau{1&3&4\\2},\tableau{4&5\\5&6},\tableau{6}\right), \mbox{ \ and \ }
\left(\tableau{1&3&4\\2},\tableau{4&5\\5&6\\6},\emptyset\right).\]
For example, for the leftmost sequence, ${\tt EGLS}(43152\cdot 6456 \cdot \emptyset)=T[\alpha]$ holds.
\qed
\end{Example}

\subsection{The $\Omega$ polynomials}
A.~Lascoux \cite{Lascoux:trans} defines $\Omega_{\alpha}$
for $\alpha=(\alpha_1,\alpha_2,\ldots) \in {\mathbb Z}_{\geq 0}^{\infty}$ by replacing $\pi_i$ in the definition
of the key polynomials with the operator defined by
\[{\widetilde \pi}_i(f)=\partial_i(x_i(1-x_{i+1})f).\]
The initial condition is
$\Omega_{\alpha}=x_1^{\alpha_1} x_2^{\alpha_2} x_{3}^{\alpha_3}\cdots 
(=\kappa_{\alpha})$, if $\alpha$ is weakly decreasing.

The {\bf skyline diagram}
is ${\tt Skyline}(\alpha)=\{(i,y):1\leq y\leq \alpha_i\}\subset {\mathbb N}^2$.
Graphically, it is a collection of columns $\alpha_i$ high.
For instance,
\[{\tt Skyline}(1,3,0,2,2,1)=\left(\begin{matrix}
\!.\! & \!+\! & \!.\! & \!.\! &\!.\! & \!.\! \\
\!.\! & \!+\! & \!.\! & \!+\! &\!+\! & \!.\! \\
\!+\! & \!+\! & \!.\! & \!+\! &\!+\! & \!+\!
\end{matrix}\right)\]

Beginning with ${\tt Skyline}(\alpha)$, {\bf Kohnert's rule} \cite{Kohnert} 
generates
diagrams $D$ by sequentially moving any $+$
at the top of its column to the rightmost open position in its row and to its left. (The result of such a move
need not be the skyline of any $\gamma\in {\mathbb Z}_{\geq 0}^\infty$.)
Let $x^D=\prod_i x_i^{d_i}$ be the column weight where $d_i$ is the number of $+$'s in column $i$
of $D$. If the same $D$ results from a different sequence of moves, it only counts once. Kohnert's theorem states $\kappa_{\alpha}=\sum x^D$, where the
sum is over all such $D$. Extending this, we introduce:

\noindent
{\bf The $K$-Kohnert rule:} Each $+$ either moves
    as in Kohnert's rule, or stays in place \emph{and} moves. In the latter case, mark
    the original position with a ``$g$''. The $g$'s are unmovable, 
but a given $+$
    treats $g$ the same as other $+$'s when deciding if it can move,
and to where. Diagrams with
    the same occupied positions but different arrangements of $+$'s and $g$'s are counted separately.

\begin{Example}
Below, we give all $K$-Kohnert moves one step from $D$:
\[D=\left(\begin{matrix}
\!+\! & \!.\! & \!g\! & \!+\! &\!.\! \\
\!.\! & \!+\! & \!+\! & \!+\! & \!+\!
\end{matrix}\right)\mapsto
\left(\begin{matrix}
\!+\! & \!.\! & \!g\! & \!+\! & \!.\! \\
\!+\! & \!.\! & \!+\! & \!+\! & \!+\!
\end{matrix}\right), \
\left(\begin{matrix}
\!+\! & \!.\! & \!g\! & \!+\! & \!.\! \\
\!+\! & \!g\! & \!+\! & \!+\! & \!+\!
\end{matrix}\right),  \
\left(\begin{matrix}
\!+\! & \!+\! & \!g\! & \!.\! &\!.\! \\
\!.\! & \!+\! & \!+\! & \!+\! & \!+\!
\end{matrix}\right),
\]
\[ \ \ \ \ \ \ \ \ \ \ \ \ \ \ \ \ \ \ \  \ \ \ \ \ \ \ \ \ \ \ \ \ \ \ \ \  \ \ \ \ \ \ \ \
\left(\begin{matrix}
\!+\! & \!+\! & \!g\! & \!g\! &\!.\! \\
\!.\! & \!+\! & \!+\! & \!+\! & \!+\!
\end{matrix}\right), \
\left(\begin{matrix}
\!+\! & \!.\! & \!g\! & \!+\! &\!.\! \\
\!+\! & \!+\! & \!+\! & \!+\! & \!.\!
\end{matrix}\right), \
\left(\begin{matrix}
\!+\! & \!.\! & \!g\! & \!+\! &\!.\! \\
\!+\! & \!+\! & \!+\! & \!+\! & \!g\!
\end{matrix}\right).
\]
\end{Example}

Let
 \[J_{\alpha}^{(\beta)}=\sum \beta^{(\small \#\mbox{$g$'s appearing in $D$})}x^D.\]
\begin{Conjecture}
\label{conj:OmegaequalsJ}
$J_{\alpha}^{(-1)}=\Omega_{\alpha}$.
\end{Conjecture}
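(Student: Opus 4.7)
The natural approach is induction on the length of $w[\alpha]$, verifying that $J_\alpha^{(-1)}$ obeys both the initial condition and the defining recursion of $\Omega_\alpha$. For the base case, when $\alpha$ is weakly decreasing, ${\tt Skyline}(\alpha)$ is left-justified in the sense that the top $+$ at height $\alpha_c$ in column $c$ has every cell in row $\alpha_c$ to its left already occupied (since $\alpha_j \geq \alpha_c$ for $j<c$). Thus no $+$ can move, and no $K$-Kohnert move leaving a $g$ behind is available either, since such a move still requires an open destination in the same row to the left. Hence $J_\alpha^{(\beta)}=x^\alpha=\Omega_\alpha$.

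For the inductive step, fix $i$ with $\alpha_i<\alpha_{i+1}$, set $\widehat{\alpha}$ to be the swap of $\alpha_i,\alpha_{i+1}$, and assume $J_{\widehat{\alpha}}^{(-1)}=\Omega_{\widehat{\alpha}}$. Using the Leibniz rule for $\partial_i$ and the $s_i$-invariance of $x_ix_{i+1}$, the desired identity becomes
\[J_\alpha^{(-1)} \;=\; {\widetilde \pi}_i\bigl(J_{\widehat{\alpha}}^{(-1)}\bigr) \;=\; \pi_i\bigl(J_{\widehat{\alpha}}^{(-1)}\bigr) \;-\; x_ix_{i+1}\,\partial_i\bigl(J_{\widehat{\alpha}}^{(-1)}\bigr).\]
The strategy is to stratify the $K$-Kohnert diagrams of $\alpha$ by the pattern of $+$'s and $g$'s in the two-column strip $\{i,i+1\}$, and for each stratum exhibit either a weight-preserving bijection with a $K$-Kohnert diagram of $\widehat{\alpha}$ that contributes to the $\pi_i$-piece, or a sign-reversing involution whose fixed points recover the $-x_ix_{i+1}\partial_i$ correction. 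Conceptually this mirrors the Winkel-style proof of Kohnert's rule for key polynomials, but with the extra $g$-markings carrying the $K$-theoretic cancellation.

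The main obstacle is twofold. First, a given diagram of $\alpha$ is typically reachable by several distinct $K$-Kohnert histories from ${\tt Skyline}(\alpha)$: a $g$ created at one step may block or enable later moves of nearby $+$'s. A canonical parsing, perhaps by processing columns right-to-left and invoking a uniqueness argument in the spirit of \cite{Kohnert}, must be fixed before any induction on the strip $\{i,i+1\}$ can be run. Second, because ${\widetilde \pi}_i$ is not a positive operator at $\beta=-1$, the combinatorics must exhibit \emph{genuine} cancellation: the sign-reversing involution has to annihilate every diagram of $\alpha$ that lies outside the image of ${\widetilde \pi}_i\bigl(J_{\widehat{\alpha}}^{(-1)}\bigr)$, while preserving the $g$-count modulo $2$. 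Producing an involution with these properties is the crux, and I expect a successful formulation to also yield the companion Grothendieck conjecture that would naturally be stated next.
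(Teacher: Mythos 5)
The statement you are attempting to prove is stated in the paper as a \emph{conjecture}, not a theorem: the authors support it only by computer verification for a range of $\alpha$ up to size $12$ and offer no proof. So there is no argument in the paper against which to check yours, and the bar you must clear is that of an original, complete proof of an open problem.

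Your proposal does not clear that bar — and to your credit, you essentially say so. The base case is fine: when $\alpha$ is weakly decreasing the skyline admits no Kohnert move (every cell to the left in the relevant row is occupied), hence no $K$-Kohnert move either, so $J_{\alpha}^{(\beta)} = x^{\alpha} = \Omega_{\alpha}$. The algebraic reduction
\[{\widetilde \pi}_i(f) \;=\; \pi_i(f) - x_i x_{i+1}\,\partial_i(f)\]
is also correct (Leibniz for $\partial_i$ plus $s_i$-invariance of $x_i x_{i+1}$), and the strategy of stratifying $K$-Kohnert diagrams by their restriction to columns $\{i,i+1\}$ and pairing this with a sign-reversing involution is a plausible shape for a proof. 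But everything that would make this a proof is deferred: you do not construct the bijection, you do not construct the involution, and you do not fix the ``canonical parsing'' needed to make any induction over $K$-Kohnert histories well-defined. You correctly identify both obstacles — non-uniqueness of generating histories, and the need for genuine cancellation since ${\widetilde\pi}_i$ is not positive at $\beta=-1$ — and then state they are ``the crux.'' Identifying the crux is not resolving it. A further caution: the classical recursion $\kappa_{\alpha} = \pi_i\kappa_{\widehat\alpha}$ is already notoriously delicate to realize at the level of Kohnert diagrams (Winkel's argument, which you invoke as a model, is long and intricate and was itself contested for years); grafting a sign-reversing layer on top of it is a substantial additional difficulty, not a routine extension. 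As written, this is a research program, not a proof, and the statement remains a conjecture.
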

Conjecture~\ref{conj:OmegaequalsJ} has been checked by computer, for a wide 
range of cases up to $\alpha$ being of size $12$, leaving us convinced.
Clearly, $J_{\alpha}^{(0)}=\kappa_{\alpha}$, by Kohnert's theorem.

\begin{Example} Let $\alpha=(1,0,2)$. Then the diagrams contributing to $J_{(1,0,2)}$ are:
\[{\tt Skyline}(1,0,2)=\left(\begin{matrix}
\!.\! & \!.\! & \!+\! \\
\!+\! & \!.\! & \!+\!
\end{matrix}\right), \
\left(\begin{matrix}
\!.\! & \!+\! & \!.\! \\
\!+\! & \!.\! & \!+\!
\end{matrix}\right), \
\left(\begin{matrix}
\!+\! & \!.\! & \!.\! \\
\!+\! & \!.\! & \!+\!
\end{matrix}\right), \
\left(\begin{matrix}
\!+\! & \!.\! & \!.\! \\
\!+\! & \!+\! & \!.\!
\end{matrix}\right), \
\left(\begin{matrix}
\!.\! & \!+\! & \!.\! \\
\!+\! & \!+\! & \!.\!
\end{matrix}\right);
\]
\[\left(\begin{matrix}
\!+\! & \!g\! & \!.\! \\
\!+\! & \!.\! & \!+\!
\end{matrix}\right), \
\left(\begin{matrix}
\!+\! & \!g\! & \!.\! \\
\!+\! & \!+\! & \!.\!
\end{matrix}\right), \
\left(\begin{matrix}
\!+\! & \!.\! & \!.\! \\
\!+\! & \!+\! & \!g\!
\end{matrix}\right), \
\left(\begin{matrix}
\!.\! & \!+\! & \!.\! \\
\!+\! & \!+\! & \!g\!
\end{matrix}\right), \
\left(\begin{matrix}
\!.\! & \!+\! & \!g\! \\
\!+\! & \!.\! & \!+\!
\end{matrix}\right), \
\left(\begin{matrix}
\!+\! & \!.\! & \!g\! \\
\!+\! & \!.\! & \!+\!
\end{matrix}\right);
\left(\begin{matrix}
\!+\! & \!g\! & \!.\! \\
\!+\! & \!+\! & \!g\!
\end{matrix}\right); \
\left(\begin{matrix}
\!+\! & \!g\! & \!g\! \\
\!+\! & \!.\! & \!+\!
\end{matrix}\right).\]
Thus
\begin{multline}\nonumber
J_{(1,0,2)}=(x_1 x_3^2 + x_1 x_2 x_3 + x_1^2 x_3 +x_1^2 x_2
+x_1 x_2^2)\\ \nonumber
-(x_1^2 x_2 x_3 + x_1^2 x_2^2 +x_1^2 x_2 x_3
+x_1 x_2^2 x_3 + x_1 x_2 x_3^2
+x_1^2 x_3^2)+(x_1^2 x_2^2 x_3 + x_1^2 x_2 x_3^2).
\end{multline}

The lowest degree homogeneous
component of $\Omega_{\alpha}$ is $\kappa_{\alpha}$.  Hence any $f\in {\sf Pol}$ is a
 possibly \emph{infinite} linear combination of the $\Omega_{\alpha}$'s. Finiteness is
asserted in \cite[Chapter 5]{Poly}. We show in Section~4.2 that
the $J_{\alpha}$'s also form a (finite) basis. 

\subsection{Grothendieck polynomials}
The {\bf Grothendieck polynomial} \cite{LasSch2} is defined using the {\bf isobaric divided difference
operator} whose action on $f\in {\sf Pol}$ is given by:
\[\pi_i(f)=\partial_i((1-x_{i+1})f).\]
Declare ${\mathfrak G}_{w_0}(X)=x_1^{n-1}x_2^{n-2}\cdots x_{n-1}$ 
where $w_0$ is the long element in $S_n$. Set ${\mathfrak G}_w(X)=\pi_i({\mathfrak G}_{ws_i})$ 
if $i$ is an ascent of $w$.
The Grothendieck polynomials are known to lift $\{s_{\lambda}\}$ to  ${\sf Pol}$.

One has 
${\mathfrak G}_w={\mathfrak S}_w+\mbox{(higher degree terms)}$.
We now state the A.~Kohnert's conjecture \cite{Kohnert} for ${\mathfrak S}_w$.
The {\bf Rothe diagram} is ${\tt Rothe}(w)=\{(x,y)|y<w(x) \mbox{\ and \ } x<w^{-1}(y)\}\subset [n]\times [n]$
(indexed so that the southwest corner is labeled $(1,1)$).
Starting with ${\tt Rothe}(w)$, the Kohnert's rule generates diagrams $D$ by applying the same rules
as described for his rule for $\kappa_{\alpha}$. Then ${\mathfrak S}_w=\sum x^D$; the
sum is over all such $D$.

Analogously, we define
\[K_w^{(\beta)}=\sum_{D} \beta^{\small \mbox{($\#g$'s appearing in $D$)}} {\bf x}^D\]
where the sum is over all diagrams $D$ generated by the $K$-Kohnert rule.
For example, if $w=3142$ the diagrams contributing to $K_w^{(\beta)}$ are
\[{\tt Rothe}(3142)=\left(\begin{matrix}
\!.\!\! & \!.\! & \!.\! & \!.\!\\
\!.\!\! & \!.\! & \!.\! & \!.\!\\
\!+\!\! & \!.\! & \!+\! & \!.\!\\
\!+\!\! & \!.\! & \!.\! & \!.\!
\end{matrix}\right), \
\left(\begin{matrix}
\!\!.\!\! & \!\!.\!\! & \!.\! & \!.\!\\
\!\!.\!\! & \!\!.\!\! & \!.\! & \!.\!\\
\!\!+\!\! & \!\!+\!\! & \!.\! & \!.\!\\
\!\!+\!\! & \!\!.\!\! & \!.\! & \!.\!
\end{matrix}\right), \
\left(\begin{matrix}
\!.\!\! & \!\!.\!\! & \!.\! & \!\!.\!\\
\!.\!\! & \!\!.\!\! & \!.\! & \!\!.\!\\
\!+\!\! & \!\!+\!\! & \!g\! & \!\!.\!\\
\!+\!\! & \!\!.\!\! & \!.\! & \!\!.\!
\end{matrix}\right).\]
and hence correspondingly, $K_{3142}^{(-1)}=(x_1^2 x_3 + x_1^2 x_2) - (x_1^2 x_2 x_3)$.

        \begin{Conjecture}
          \label{conj:groth}
        $K_w^{(-1)}=\Groth_w$.
        \end{Conjecture}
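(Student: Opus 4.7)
The plan is to prove Conjecture~\ref{conj:groth} by induction on the codimension $\binom{n}{2}-\ell(w)$ of $w$ in $S_n$, working throughout with the formal $\beta$-deformation $K_w^{(\beta)}$ and establishing the stronger identity $K_w^{(\beta)}=\Groth_w^{(\beta)}$, where $\Groth_w^{(\beta)}$ is defined by $\Groth_{w_0}^{(\beta)}=x_1^{n-1}x_2^{n-2}\cdots x_{n-1}$ together with the $\beta$-isobaric recursion $\pi_i^{(\beta)}(f)=\partial_i\bigl((1+\beta x_{i+1})f\bigr)$. Specializing at $\beta=-1$ yields the conjecture, and specializing at $\beta=0$ recovers Winkel's theorem that Kohnert's rule computes $\schub_w$; treating $\beta$ as a formal parameter converts the eventual sign-cancellation at $\beta=-1$ into an identity of polynomials in $\beta$. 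The base case $w=w_0$ is immediate: ${\tt Rothe}(w_0)=\{(x,y):x+y\leq n\}$ is the staircase, the top of column $j$ sits at row $n-j$ with every position to its left in that row already filled, so no $K$-Kohnert move is possible and $K_{w_0}^{(\beta)}=x_1^{n-1}x_2^{n-2}\cdots x_{n-1}=\Groth_{w_0}^{(\beta)}$.

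For the inductive step, fix an ascent $i$ of $w$. Then $\ell(ws_i)=\ell(w)+1$ and, by induction, $K_{ws_i}^{(\beta)}=\Groth_{ws_i}^{(\beta)}$, so it suffices to prove the recursion
\[K_w^{(\beta)}=\pi_i^{(\beta)}\bigl(K_{ws_i}^{(\beta)}\bigr).\]
The Rothe diagrams of $w$ and $ws_i$ differ only by a local modification confined to rows $i$ and $i+1$, and the plan is to adapt Winkel's orbit argument from the $\beta=0$ case: partition the $K$-Kohnert diagrams of $ws_i$ into classes under a canonical row-$i$/row-$(i+1)$ involution, then match each class against the monomial terms produced by $\pi_i^{(\beta)}$ applied to its representative weight.

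The principal technical hurdle is extending Winkel's bookkeeping to track the $g$'s. A single $+$ whose trajectory crosses rows $i$, $i+1$ can bifurcate into a $(+,g)$-pair, and the $\beta x_{i+1}$ portion of $\pi_i^{(\beta)}$ must be matched term-for-term against these bifurcations (and their iterates along chains of moves). I expect this will require an explicit sign-reversing involution on pairs (diagram for $ws_i$, term of $\pi_i^{(\beta)}$) whose fixed-point set bijects onto $K$-Kohnert diagrams of $w$ with the correct $\beta^{\#g}x^D$ weights; engineering this so that it correctly couples ``stay-and-move'' bifurcations with the $(1+\beta x_{i+1})$ factor is the main obstacle. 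Should the direct attack stall, an alternative is to build a weight-preserving bijection between $K$-Kohnert diagrams and $K$-theoretic pipe dreams for $w$, for which the Grothendieck formula is established in \cite{FK} and its $K$-theoretic refinements; this bypasses the recursion entirely, at the cost of losing the parallel with Winkel's proof.
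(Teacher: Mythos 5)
Note first that this statement is presented in the paper as a \emph{conjecture}: the authors check it by computer for $n\leq 7$ but offer no proof, so there is no argument of theirs to compare against; any correct proof would be new.

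Your outline is sensible as far as it goes: introduce the formal parameter $\beta$, aim for the stronger identity $K_w^{(\beta)}=\Groth_w^{(\beta)}$ by downward induction on $\ell(w)$, verify the base case $w=w_0$ (correct, since the staircase ${\tt Rothe}(w_0)$ admits no $K$-Kohnert move), and reduce to the recursion $K_w^{(\beta)}=\pi_i^{(\beta)}\bigl(K_{ws_i}^{(\beta)}\bigr)$ at an ascent $i$ of $w$. But that recursion \emph{is} the conjecture, and you do not prove it --- you identify it as ``the main obstacle,'' say you ``expect'' it will require a sign-reversing involution you have not constructed, and offer an unexplored fallback. Even at $\beta=0$ the corresponding fact --- that Kohnert diagrams of ${\tt Rothe}(w)$ realize the divided-difference recursion for $\schub_w$ --- was open for roughly a decade after Kohnert's thesis, needed Winkel's intricate argument, and, as the paper itself remarks, ``remains mysterious, even after \cite{Winkel}.'' Transporting that argument to the ghost-marked setting faces concrete new difficulties you do not address: ghosts are not confined to rows $i,i+1$ (a $g$ deposited anywhere blocks later moves, so the reachable diagram sets of $w$ and $ws_i$ are \emph{not} related by a modification local to those two rows); the count-each-diagram-once convention makes orbit arguments delicate when one $(+,g)$-configuration arises from several move sequences; and the $(1+\beta x_{i+1})$ factor of $\pi_i^{(\beta)}$ must be cancelled against \emph{absent} diagrams via an involution you would need to exhibit, not merely posit. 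Your fallback of bijecting $K$-Kohnert diagrams with pipe dreams and invoking Fomin--Kirillov runs into the same wall already at $\beta=0$, where no direct structure-preserving bijection between Kohnert diagrams and pipe dreams is known. As written, this is a plausible research plan, not a proof.
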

Note, $K_w^{(0)}={\mathfrak S}_w$ is precisely Kohnert's conjecture.
Conjecture~\ref{conj:groth} has been checked by computer for $n\leq 7$,
and extensively for larger $n$. While Kohnert's rule for ${\mathfrak S}_w$ is 
handy, it remains mysterious, even after
\cite{Winkel}. Conjectures~\ref{conj:OmegaequalsJ} and~\ref{conj:groth} 
return to Kohnert's conjecture 
(albeit with a parameter $\beta$).

\section{Proof of Theorem~\ref{claim:main}}

\subsection{Reduced word combinatorics}
Given $w\in S_n$, let
\[{\bf a}=(a_1,a_2,\ldots a_{\ell(w)}) \mbox{\ and \ } {\bf i}=(i_1,i_2,\ldots,i_{\ell(w)}).\]
In connection to \cite{BJS}, we say the pair $({\bf a},{\bf i})$ is a {\bf stable compatible pair for $w$} if
$s_{a_1}\cdots s_{a_{\ell(w)}}$ is a reduced word for $w$ and the following two conditions
on ${\bf i}$ hold:
\begin{itemize}
\item[(cs.1)] $1\leq i_1\leq i_2\leq\cdots\leq i_{\ell(w)}<n$;
\item[(cs.2)] $a_j<a_{j+1}\implies i_j<i_{j+1}$.
\end{itemize}
We will identify $w$ with ${\bf a}$ and the associated reduced word.

The {\bf Edelman-Greene correspondence} \cite{EG} (the same basic construction is used in \cite{LasSch2})
is a bijection
\[{\tt EGLS}:({\bf a},{\bf i})\mapsto (T,U)\]
where
\begin{itemize}
\item $T$ is an increasing tableau such that ${\tt row}(T)$ is a
reduced word for ${\bf a}$; and
\item $U$ is a semistandard tableau whose multiset of labels is precisely
those in ${\bf i}$, and which has the same shape as $T$.
\end{itemize}

\noindent
{\sf {\tt EGLS} (column) insertion:} Initially insert $a_j$ into the leftmost column (of what will be $T$).
If there are no labels strictly larger than $a_j$, we place $a_j$ at the bottom of that column.
If $a_j +t$ for $t>2$ appears, we bump this $a_j+t$ to the next column to the right, replacing it with $a_j$.
The same holds if $a_j+1$ appears but not $a_j$. Finally, if both $a_j+1$ and $a_j$ already appear, we insert
$a_j+1$ into the next column to the right. Since ${\bf a}$ is assumed to be reduced, the above enumerates all possibilities. Finally at step $j$ a new box is created at a corner; in what will be $U$ we place $i_j$ .

Mildly abusing terminology, let ${\tt EGLS}({\bf a})=T$.  

\excise{\noindent
{\sf Description of $K_{-}^{0}(T)$ using Edelman-Greene:} For completeness we describe the computation of $K_{-}^0(T)$ in terms of Edelman-Greene insertion. Our description is different than, but equivalent to the definition from \cite{Reiner.Shimozono}. The first column of $K_{-}^0(T)$ is the same as the first column of $T$. Suppose
the first $j$ columns of $K_{-}^0(T)$ have been determined. To determine the $(j+1)$-st column suppose the labels in column $j+1$ of $T$ are $x_1<x_2<\ldots<x_{m}$.
Do reverse Edelman-Greene insertion of $x_{m}, x_{m-1},\ldots,x_1$ (in that order) into column $j$ of $T$. That is, we bump
the \emph{largest} label $y_m$ strictly smaller than $x_m$ (if $x_m$ already appeared in column $j$ of $T$ then $x_{m}-1$ must also appear). Repeat
for $x_{m-1},\ldots,x_1$ producing $y_m>y_{m-1}>\ldots>y_1$. Now continue by inserting these labels into column $j-1$ of $T$ etc.

Since we will not actually need the details of this algorithm in our proof, we content ourselves with an example.
\begin{Example}
If $T=\tableau{1&2&3\\2&3\\4}$ then $K_{-}^0(T)=\tableau{1&1&1\\2&2\\4}$. To obtain the second column we reverse insert $3$ and $2$ into the first column of $T$
giving $2$ and $1$ respectively. To obtain the third column we reverse insert $3$ into the second column of $T$, outputting $2$. This $2$ is reverse inserted
into the first column, giving $1$.\qed}

\end{Example}
\subsection{Formulas for Schubert polynomials}
A stable compatible pair $({\bf a},{\bf i})$ is a {\bf compatible pair for $w$}
if in addition to (cs.1) and (cs.2) the following holds:
\begin{itemize}
\item[(cs.3)] $i_j\leq a_j$.
\end{itemize}
Let ${\tt Compatible}(w)$ be the set of compatible sequences for $w$.
A rule of \cite{BJS} states:
\begin{equation}
\label{eqn:BJS}
{\mathfrak S}_w(X)=\sum_{({\bf a},{\bf i})\in {\tt Compatible}(w)}{\bf x}^{\bf i}.
\end{equation}

A {\bf descent} of $w$ is an index $j$ such that $w(j)>w(j+1)$. Let ${\tt Descents}(w)$ be the set of
descents of $w$. The following is \cite[Corollary~3]{BKTY}:

\begin{Theorem}
\label{thm:BKTY}
Let $w \in S_n$ and suppose ${\tt Descents}(w)\subseteq \{d_1<d_2<\ldots<d_k\}$. Then
\begin{equation}
\label{eqn:Schubsplit}
{\mathfrak S}_w(X)=\sum_{\lambda^1,\ldots,\lambda^k}c_{\lambda^1,\ldots,\lambda^k}^w s_{\lambda^1}(X_1)\cdots
s_{\lambda^k}(X_k)
\end{equation}
where $c_{\lambda^1,\ldots,\lambda^k}^w$ counts the number of tuples of increasing tableaux $(T_1,\ldots,T_k)$
where
\begin{itemize}
\item[(i)] $T_i$ has shape $\lambda^i$;
\item[(ii)] $\min T_1 >0, \min T_2>d_1,\ldots,\min T_k>d_{k-1}$; and
\item[(iii)] ${\tt row}(T_1)\cdots {\tt row}(T_k)$ is a reduced word of $w$.
\end{itemize}
\end{Theorem}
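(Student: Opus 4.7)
The plan is to combine the BJS formula (\ref{eqn:BJS}) with the Edelman--Greene correspondence, applied blockwise, and to invoke the separate symmetry of ${\mathfrak S}_w$ in each $X_r$. Since $\partial_i {\mathfrak S}_w=0$ for every $i\notin {\tt Descents}(w)$, and in particular for every $i\notin\{d_1,\ldots,d_k\}$, the polynomial ${\mathfrak S}_w$ is symmetric in $x_i,x_{i+1}$ for such $i$. Those simple transpositions generate the symmetric group on each $X_r$, so ${\mathfrak S}_w$ is separately symmetric in each $X_r$ and the expansion (\ref{eqn:Schubsplit}) exists with integer coefficients $c^w_{\lambda^1,\ldots,\lambda^k}$; the goal is to realize these as the stated non-negative tableau counts.

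For each compatible pair $({\bf a},{\bf i})\in{\tt Compatible}(w)$, I would partition the index set $[1,\ell(w)]$ into blocks $B_r=\{j:d_{r-1}<i_j\le d_r\}$ (with $d_0=0$), inducing factorizations ${\bf a}={\bf a}^1\cdots{\bf a}^k$ and ${\bf i}={\bf i}^1\cdots{\bf i}^k$. As a sub-segment of the reduced word ${\bf a}$, each ${\bf a}^r$ is reduced for some $u^r$, with $u^1u^2\cdots u^k=w$ and $\sum\ell(u^r)=\ell(w)$; the compatibility bound $a^r_j\ge i^r_j>d_{r-1}$ forces every entry of ${\bf a}^r$ to exceed $d_{r-1}$. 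Applying Edelman--Greene to each ${\bf a}^r$ produces an increasing tableau $T_r$ with ${\tt row}(T_r)$ another reduced word for $u^r$; its entries still exceed $d_{r-1}$, so $\min T_r>d_{r-1}$, and EG-equivalence ensures ${\tt row}(T_1)\cdots{\tt row}(T_k)$ is reduced for $w$. The recording side of EG attaches to $({\bf a}^r,{\bf i}^r)$ a semistandard tableau $U_r$ of shape $\lambda^r={\tt shape}(T_r)$ with content ${\bf i}^r$, hence with entries lying in $X_r$. Thus each compatible pair for $w$ collapses to a tuple $(T_1,\ldots,T_k)$ satisfying (i)--(iii), decorated with recording tableaux $(U_1,\ldots,U_k)$.

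The crux, and the principal obstacle, is to show that for each fixed $(T_1,\ldots,T_k)$ the total $\sum x^{U_1}\cdots x^{U_k}$ over compatible pairs collapsing to this tuple equals $\prod_r s_{\lambda^r}(X_r)$. Without the compatibility (cs.3), EG would biject stable compatible pairs with \emph{all} semistandard $U_r$ of shape $\lambda^r$ with entries in $X_r$, whose generating function is $\prod_r s_{\lambda^r}(X_r)$ by definition. The constraint $i^r_j\le a^r_j$ generically carves out a flagged subset on the $U_r$ side, so the heart of the argument is to verify that under ${\tt Descents}(w)\subseteq\{d_1,\ldots,d_k\}$ this flag becomes vacuous once SSYT strict-column-increase and the upper entry bound $U_r\le d_r$ are in force. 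I would translate (cs.3) through column insertion into a cell-wise inequality of the form $U_r(r,c)\le T_r(r,c)$, and then show that the row and column constraints on $T_r$ (inherited from $T_r$ being an increasing tableau whose row word is reduced for a permutation whose descents fit the block structure) already imply this bound automatically. Once the bijective identification is in place, summing over tuples $(T_1,\ldots,T_k)$ reproduces (\ref{eqn:Schubsplit}) with the claimed combinatorial interpretation of $c^w_{\lambda^1,\ldots,\lambda^k}$.
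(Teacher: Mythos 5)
There is a genuine gap, and it sits exactly where you say it does. First, a point of orientation: the paper does not prove Theorem~\ref{thm:BKTY} at all --- it is imported verbatim as \cite[Corollary~3]{BKTY} (with an alternative proof via crystal operators referenced in \cite[Section~5]{Lenart}). In fact the logical flow in the paper runs in the opposite direction from yours: the blockwise-{\tt EGLS} map you construct in your second paragraph is precisely the map $\Phi$ of Proposition~\ref{prop:Phi}, and there the surjectivity of $\Phi$ (equivalently, the statement that every tuple $(U_1,\ldots,U_k)$ of semistandard recording tableaux with entries in the prescribed blocks actually arises from a compatible pair) is \emph{deduced} from Theorem~\ref{thm:BKTY} together with (\ref{eqn:BJS}) by a weight-count, not proved combinatorially. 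So your forward construction (splitting ${\bf a},{\bf i}$ by the blocks $(d_{r-1},d_r]$, using (cs.3) to see $\min T_r>d_{r-1}$, and Coxeter--Knuth invariance of the underlying permutation) is sound and coincides with the paper's well-definedness argument, but it only gives an injection of ${\tt Compatible}(w)$ into decorated tuples; the theorem is equivalent to that injection being onto, and that is the part you have not established.

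Your third paragraph acknowledges this crux but only offers a plan, and the plan as stated is doubtful. The claim that (cs.3) translates under column insertion into a cell-wise inequality $U_r\le T_r$ is not justified, and it is not the standard translation: the effect of the flag condition $i_j\le a_j$ on the recording tableau for a \emph{fixed} insertion tableau is governed by key/right-key conditions in the sense of Lascoux--Sch\"utzenberger and Reiner--Shimozono (this is exactly why, for a single fixed $T$, the generating function of admissible $U$'s is a key polynomial as in Theorem~\ref{thm:RS}, not a Schur polynomial in bounded variables). So the assertion that, under ${\tt Descents}(w)\subseteq\{d_1,\ldots,d_k\}$ and for tuples satisfying (i)--(iii), the flag condition becomes vacuous is precisely the nontrivial content of \cite[Corollary~3]{BKTY}; it needs a real argument (BKTY obtain it from their quiver formula machinery, Lenart from explicit crystal operators), and you cannot borrow the paper's surjectivity argument without circularity. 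Until that step is carried out --- with a correct translation of (cs.3) and a proof that it is implied by the block bounds --- the proposal does not constitute a proof of the theorem.
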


Assume for the remainder of the proof that
\begin{equation}
\label{eqn:asspt}
{\tt Descents}(w)\subseteq \{d_1<d_2<\ldots<d_k\}.
\end{equation}
Let
\[{\tt Tuples}(w)=\{[(T_1,U_1),(T_2,U_2),\ldots,(T_k,U_k)]\}\]
where the $T_i$'s satisfy (i), (ii) and (iii) from Theorem~\ref{thm:BKTY}, and each $U_i$ is a semistandard
tableau of shape $\lambda^i$ using the labels $d_{i-1}+1,d_{i-1}+2,\ldots, d_i$ ($d_0=0$).

\subsection{``Splitting'' the ${\tt EGLS}$ correspondence}
Assuming (\ref{eqn:asspt}) we define:
\[\Phi:{\tt Compatible}(w)\to {\tt Tuples}(w).\]

\noindent
{\sf Description of $\Phi$ (using {\tt EGLS}):} Uniquely split $({\bf a},{\bf i})\in {\tt Compatible}$ as follows
\begin{equation}
\label{eqn:thesplit}
\left(({\bf a}^{(1)},{\bf i}^{(1)}), ({\bf a}^{(2)},{\bf i}^{(2)}),\cdots, ({\bf a}^{(k)},{\bf i}^{(k)})\right)
\end{equation}
where
\begin{itemize}
\item ${\bf a}={\bf a}^{(1)}\cdots {\bf a}^{(k)}$ and
${\bf i}={\bf i}^{(1)}\cdots {\bf i}^{(k)}$
(``$\cdots$'' means concatenation); and
\item the entries of $i^{(j)}$ are contained in the set
$\{d_{j-1}+1,d_{j-1}+2,\cdots,d_j\}$.
\end{itemize}
Now define
\[\Phi(({\bf a},{\bf i})):=\left({\tt EGLS}({\bf a}^{(1)},{\bf i}^{(1)}),\cdots, {\tt EGLS}({\bf a}^{(k)},{\bf i}^{(k)})\right).\]

\begin{Proposition}
\label{prop:Phi}
The map $\Phi:{\tt Compatible}(w)\to {\tt Tuples}(w)$ is well-defined and a
bijection.
\end{Proposition}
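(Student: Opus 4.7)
The plan is to establish three things: $\Phi$ is well-defined (i.e., lands in ${\tt Tuples}(w)$), $\Phi$ is injective, and $\Phi$ is surjective.

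For well-definedness, first observe that the split (\ref{eqn:thesplit}) is uniquely determined. By (cs.1), ${\bf i}$ is weakly increasing, and by (cs.3) together with the standard fact that every letter in a reduced word of $w$ is bounded by $\max {\tt Descents}(w) \leq d_k$, each $i_t \leq d_k$. Grouping indices by which interval $\{d_{j-1}+1, \ldots, d_j\}$ contains $i_t$ yields the split uniquely; each piece $({\bf a}^{(j)}, {\bf i}^{(j)})$ inherits (cs.1) and (cs.2), and ${\bf a}^{(j)}$ is a contiguous subword of the reduced word ${\bf a}$, hence itself reduced. Each piece is therefore a stable compatible pair, so EGLS produces $(T_j, U_j)$ with common shape $\lambda^j$. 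I then verify the three conditions of ${\tt Tuples}(w)$: condition (i) on shapes is built into EGLS; condition (ii), $\min T_j > d_{j-1}$, follows because inside block $j$ compatibility (cs.3) gives $a_t \geq i_t \geq d_{j-1}+1$ and the entries of $T_j$ coincide with those of ${\bf a}^{(j)}$ as multisets; condition (iii) holds because EGLS preserves the Coxeter-equivalence class of reduced words, so each ${\tt row}(T_j)$ is reduced for the same permutation as ${\bf a}^{(j)}$, and concatenation yields a reduced word for $w$. Also, $U_j$'s entries equal the multiset of ${\bf i}^{(j)}$ and therefore lie in $\{d_{j-1}+1, \ldots, d_j\}$.

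Injectivity is immediate: EGLS is a bijection on stable compatible pairs, so each $(T_j, U_j)$ uniquely recovers $({\bf a}^{(j)}, {\bf i}^{(j)})$, and concatenation reconstructs $({\bf a}, {\bf i})$.

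For surjectivity, given $[(T_j, U_j)] \in {\tt Tuples}(w)$, I apply inverse EGLS block-by-block to obtain stable compatible pairs $({\bf a}^{(j)}, {\bf i}^{(j)})$ and concatenate to $({\bf a}, {\bf i})$. That ${\bf a}$ is reduced for $w$ follows from (iii). For (cs.1): within each block, $U_j$ semistandard yields weakly increasing insertion labels; between blocks the ${\bf i}^{(j)}$'s live in disjoint, strictly-ordered ranges. For (cs.2): within blocks it holds by stable compatibility, and at block boundaries it holds automatically since $i_t < i_{t+1}$. The main obstacle is verifying (cs.3), $i_t \leq a_t$, within each block. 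The range constraints alone give only $i_t \leq d_j$ and $a_t \geq d_{j-1}+1$, which do not suffice. A direct verification proceeds by tracking EGLS column insertion: semistandardness of $U_j$ with entries $\geq d_{j-1}+1$ forces $U_j(r,c) \geq d_{j-1}+r$, while strict increase down the columns of $T_j$ (also $\geq d_{j-1}+1$) combined with the structure of the insertion path yields $a_t \geq U_j(r,c)$, giving (cs.3). A cleaner alternative invokes Theorem~\ref{thm:BKTY}: both $|{\tt Compatible}(w)|$ weighted by ${\bf x}^{\bf i}$ and $|{\tt Tuples}(w)|$ weighted by the combined content of the $U_j$'s equal the coefficients of ${\mathfrak S}_w$, so the injective well-defined map $\Phi$ must be surjective by a dimension count.
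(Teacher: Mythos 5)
Your proof is correct and takes essentially the same approach as the paper: well-definedness and injectivity follow from the bijective nature of the {\tt EGLS} correspondence applied block-by-block, and surjectivity is deduced by the weight-preserving dimension count comparing (\ref{eqn:BJS}) with Theorem~\ref{thm:BKTY}. The detour into a direct verification of (cs.3) is unnecessary (and incompletely justified as written) --- the paper goes straight to the counting argument you call the ``cleaner alternative.''
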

\begin{proof}
{\sf $\Phi$ is well-defined:} The condition (i) is just says 
$T_j$ and $U_j$ have the same shape, which is
true by ${\tt EGLS}$'s description.  For (ii), the splitting says each label in ${\bf i}^{(j)}$ is strictly bigger than
$d_{j-1}$. Now by (cs.3), each label in ${\bf a}^{(j)}$ is strictly bigger than
$d_{j-1}$ as well. By ${\tt EGLS}$'s definition, the set of
labels appearing in $T_j$ is the same as that of ${\bf a}^{(j)}$;
hence (ii) holds. Lastly, ${\tt row}(T_j)$ is a reduced word for $a^{(j)}$. Then (iii)
is clear.

\noindent
{\sf $\Phi$ is a bijection:} Since ${\tt EGLS}$ is a bijective correspondence, clearly $\Phi$ is an injection. 
Consider the weight function on ${\tt Compatible}(w)$ that assigns
$({\bf a},{\bf i})$ weight ${\bf x}^{{\bf i}}$
and assigns $[(T_1,U_1),\ldots,(T_k,U_k)]$ the weight
${\bf x}^{U_1}\cdots {\bf x}^{U_k}$, where ${\bf x}^{U_i}$ is the usual monomial associated to the tableau $U_i$. Then clearly $\Phi$ is a weight-preserving
map (since ${\tt EGLS}$ is similarly weight-preserving). Hence the surjectivity of $\Phi$ 
holds by (\ref{eqn:BJS}) and Theorem~\ref{thm:BKTY}. \end{proof}

See \cite[Section~5]{Lenart} for a proof of Theorem~\ref{thm:BKTY} 
which is close to the study of the split {\tt EGLS} correspondence 
(the argument constructs certain crystal operators).

\subsection{The tableau $T[\alpha]$}
Recall $w[\alpha]\in S_{\infty}$ satisfies ${\tt code}(w[\alpha])=\alpha$.
Let $\prec$ be the pure reverse lexicographic total ordering on monomials. The Schubert polynomial ${\mathfrak S}_{w[\alpha]}$ has leading term ${\bf x}^{\alpha}$ (with respect to $\prec$).
The same is true of $\kappa_{\alpha}$ (see \cite[Corollary 7]{Reiner.Shimozono}) so
\begin{equation}
\label{eqn:lincomb}
{\mathfrak S}_{w[\alpha]}=\kappa_{\alpha}+\mbox{linear combination of other key polynomials.}
\end{equation}

Given an increasing tableau $U$, the {\bf nil left key} $K_{-}^0(U)$
is defined by \cite{LS2} (cf. \cite[p.111--114]{Reiner.Shimozono}). 
Let ${\tt sort}(\alpha)$ be the partition obtained by rearranging $\alpha$ into weakly decreasing order.
Also let ${\tt content}(T)$ the usual content vector of a semistandard tableau $T$.
This is a result of A.~Lascoux-M.-P.~Sch\"{u}tzenberger
(cf. \cite[Theorem 4]{Reiner.Shimozono}):

\begin{Theorem}
\label{thm:LS}
\[{\mathfrak S}_{w}(X)=\sum {\kappa_{{\tt content}(K_{-}^0(U))}}\]
where the sum is over all increasing tableaux $U$ of shape ${\tt sort}(\alpha)$ 
with ${\tt row}(U)=w$.
\end{Theorem}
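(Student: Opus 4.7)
The natural route is to start from the Billey--Jockusch--Stanley formula (\ref{eqn:BJS}) and push it through the Edelman--Greene correspondence to repackage the right-hand side as a sum of key polynomials. Concretely, I would rewrite
\[
{\mathfrak S}_w(X)=\sum_{({\bf a},{\bf i})\in {\tt Compatible}(w)}{\bf x}^{{\bf i}},
\]
and partition the index set by the first coordinate of ${\tt EGLS}({\bf a},{\bf i})=(T,U)$. Since ${\tt EGLS}$ is bijective onto pairs of an increasing tableau $T$ (with ${\tt row}(T)$ a reduced word for ${\bf a}$) and a semistandard $U$ of the same shape, grouping by $T$ yields
\[
{\mathfrak S}_w(X)=\sum_{T:\ {\tt row}(T)=w}\ \sum_{U} {\bf x}^{U},
\]
where the inner sum ranges over those $U$ of shape ${\tt sort}({\tt shape}(T))$ such that the pair $({\bf a},{\bf i})$ reconstructed from $(T,U)$ still satisfies the compatibility condition (cs.3), namely $i_j\leq a_j$ at every step.

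The heart of the argument is then to identify the inner sum with a single key polynomial. The standard Lascoux--Sch\"utzenberger characterization of $\kappa_\mu$ expresses it as a sum of ${\bf x}^U$ over semistandard tableaux of shape ${\tt sort}(\mu)$ whose left key is bounded by the key of $\mu$. Thus what needs to be checked is that under the Edelman--Greene bijection the constraint (cs.3) on $({\bf a},{\bf i})$ translates exactly into the left-key bound on $U$ cut out by $K_{-}^0(T)$. I would verify this step by following one Edelman--Greene column insertion at a time: each bump that shifts an entry $a_j$ rightward forces the corresponding recording entry $i_j$ to satisfy a bound which, after the induction, aggregates into precisely ``the left key of $U$ is componentwise at most $K_{-}^0(T)$.''

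With that translation in hand, the inner sum for a fixed $T$ collapses to $\kappa_{{\tt content}(K_{-}^0(T))}(X)$, and summing over the valid $T$'s, namely increasing tableaux of shape ${\tt sort}({\tt code}(w))$ with ${\tt row}(T)=w$, gives the stated expansion. The main obstacle is the technical verification that $({\tt cs.}3)$ corresponds to the bound ``left key of $U\leq K_{-}^0(T)$''; this is really the content of the nil left key construction and is delicate because one must track both the shape changes caused by non-standard bumping in ${\tt EGLS}$ and the monotonicity of the recording tableau. Once this dictionary is established, everything else is bookkeeping, and the theorem follows by combining (\ref{eqn:BJS}), the key-polynomial formula, and Proposition~\ref{prop:Phi}'s perspective on the split correspondence.
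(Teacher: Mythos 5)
The paper does not actually prove this statement: it is quoted as a known result of Lascoux--Sch\"utzenberger, with the reference being \cite[Theorem~4]{Reiner.Shimozono}. So there is no in-paper proof to compare against, only the cited source. Your sketch is essentially the Reiner--Shimozono argument: start from the Billey--Jockusch--Stanley formula~(\ref{eqn:BJS}), apply ${\tt EGLS}$ to each compatible pair, fibre over the insertion tableau $T$, and then identify each fibre's generating function with a single key polynomial by translating the condition $i_j\leq a_j$ into a bound on the left key of the recording tableau. That is the right route, and you correctly flag the one genuinely delicate step: showing that, under ${\tt EGLS}$, (cs.3) is equivalent to ``the left key of $U$ is dominated by $K_{-}^{0}(T)$,'' together with the Lascoux--Sch\"utzenberger description of $\kappa_\mu$ as a sum over semistandard tableaux with a key bound. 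Your proposal names this step but does not carry it out; as you say, it is the heart of the matter, and it is exactly the content of the nil left-key analysis in \cite{Reiner.Shimozono}.

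Two further remarks. First, the paper already states the fibrewise identity as Theorem~\ref{thm:RS} (which is \cite[Theorem~5]{Reiner.Shimozono}); granting that, Theorem~\ref{thm:LS} follows in one line by partitioning ${\tt Compatible}(w)$ according to ${\tt EGLS}({\bf a})$ and summing. Your write-up in effect re-derives Theorem~\ref{thm:RS} rather than invoking it, which is fine but duplicates work available elsewhere in the same section. Second, note that the sum in the statement should be understood as ranging over \emph{all} increasing tableaux $U$ with ${\tt row}(U)$ a reduced word for $w$ (of whatever shape); the phrase ``of shape ${\tt sort}(\alpha)$'' is not an extra hypothesis to be imposed uniformly, since distinct $U$ contributing to ${\mathfrak S}_w$ can have distinct shapes (e.g.\ $w=2143$ contributes tableaux of shapes $(1,1)$ and $(2)$). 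Your grouping-by-$T$ argument handles this automatically, but the shape side-condition as literally written would not.
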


Thus, by (\ref{eqn:lincomb}) combined with Theorem~\ref{thm:LS} there exists a 
unique increasing tableau $U[\alpha]$ 
of shape ${\tt sort}(\alpha)$ with ${\tt row}(U[\alpha])=w[\alpha]$ and 
such that $\alpha={\tt content}(K_{-}^0(U[\alpha]))$.

Let 
$F_w=\lim_{k\to \infty}{\mathfrak S}_{1^k\times w}$
be the {\bf stable Schubert polynomial} associated to $w$. This is a 
symmetric polynomial in infinitely many variables. So therefore one
has an expansion
\begin{equation}
\label{eqn:stanley.expansion}
F_w=\sum_{\lambda} a_{w,\lambda}s_{\lambda},
\end{equation}
where the $a_{w,\lambda}\in {\mathbb Z}_{\geq 0}$ are counted by increasing tableaux $A$ of shape $\lambda$ with ${\tt row}(A)=w$. 

In \cite[Theorem~4.1]{Stanley}, it is shown $a_{w,\mu(w)'}=1$
for a certain explicitly described ``maximal'' $\mu'(w)$.  Moreover
a simple description of the witnessing tableau $A[\alpha]$ is given.
Straightforwardly, $\mu'(w[\alpha])={\tt sort}(\alpha)$.
Then $T[\alpha]$ is precisely the witnessing tableau $A[\alpha]$ 
for $a_{w[\alpha],\lambda(w[\alpha])}$  (after accounting for
the fact that \cite{Stanley}'s conventions use $F_{w[\alpha]}$ for what we call $F_{w[\alpha]^{-1}}$). We leave the details to the
reader.

Finally, the expansion of Theorem~\ref{thm:LS} refines (\ref{eqn:stanley.expansion}); see, e.g., 
\cite{Reiner.Shimozono}.  Hence, 
$T[\alpha]=A[\alpha]=U[\alpha]$.  So, $T[\alpha]$ is an increasing tableau of 
shape ${\tt sort}[\alpha]$ with
${\tt row}(T[\alpha])=w[\alpha]$ and ${\tt content}(K_{-}(T[\alpha]))=\alpha$.

\subsection{Conclusion of the proof of Theorem~\ref{claim:main}:}
From the definition of ${\tt Rothe}(w[\alpha])$:
\begin{Lemma}
The descents
of $w[\alpha]$ are contained in the set of descents $d_1<d_2<\ldots<d_k$ of $\alpha$.
\end{Lemma}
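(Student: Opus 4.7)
The plan is to reduce the lemma to the standard fact that the descents of a permutation $w$ coincide with the strict descents of its Lehmer code ${\tt code}(w)$, and then invoke the hypothesis that every strict descent of $\alpha$ lies in $\{d_1,\ldots,d_k\}$. Since the paper's convention is that ${\tt code}(w)$ records the number of boxes in each column of ${\tt Rothe}(w)$, I first re-express $\alpha_i$ in a form directly comparable with $\alpha_{i+1}$.

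Concretely, writing $w = w[\alpha]$ and unpacking
\[
{\tt Rothe}(w) = \{(x,y) : y < w(x) \text{ and } x < w^{-1}(y)\},
\]
the $i$-th column is in bijection with $\{j > i : w(j) < w(i)\}$ via $(i,w(j)) \leftrightarrow j$. Hence
\[
\alpha_i = \#\{j > i : w(j) < w(i)\}, \qquad \alpha_{i+1} = \#\{j > i+1 : w(j) < w(i+1)\}.
\]
Now suppose $i$ is a descent of $w$, i.e.\ $w(i) > w(i+1)$. Splitting the count for $\alpha_i$ by separating off $j = i+1$, the indicator $[w(i+1) < w(i)]$ contributes $1$, and for $j > i+1$ the implication $w(j) < w(i+1) \Rightarrow w(j) < w(i)$ yields an inclusion of the counted sets, so $\alpha_i \geq 1 + \alpha_{i+1}$. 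Thus $i$ is a strict descent of $\alpha$, and by the hypothesis that $\{d_1,\ldots,d_k\}$ contains all strict descents of $\alpha$, we have $i \in \{d_1,\ldots,d_k\}$.

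No substantive obstacle arises here: the argument is a one-line counting observation once the Rothe-diagram convention has been unwound, and the underlying fact linking descents of $w$ with strict decreases of its Lehmer code is classical (cf.\ \cite[Proposition~2.1.2]{Manivel}, which is already invoked in the paper to define $w[\alpha]$).
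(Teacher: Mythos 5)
Your proof is correct and takes essentially the same route the paper gestures at: the paper dismisses this lemma with ``from the definition of ${\tt Rothe}(w[\alpha])$'' and gives no further argument, and your Lehmer-code computation ($\alpha_i \geq 1 + \alpha_{i+1}$ whenever $w(i) > w(i+1)$) is exactly the detail being left to the reader. The unwinding of the $(x,y)$-convention for the Rothe diagram and the set-inclusion argument are both sound.
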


 Thus,
\begin{equation}
\label{eqn:twoequals}
{\mathfrak S}_{w[\alpha]}(X)=\sum_{({\bf a},{\bf i})} {\bf x}^{\bf i}=\sum_{\lambda^1,\ldots,\lambda^k}c_{\lambda^1,\ldots,\lambda^k}^{w[\alpha]} s_{\lambda^1}(X_1)\cdots
s_{\lambda^k}(X_k).
\end{equation}
\excise{We have been careful about our choice of $T$ precisely because of (\ref{eqn:twoequals}). For example, we have ${\mathfrak S}_{2143}=\kappa_{2,0,0,0}+\kappa_{1,0,1,0}$,
and the descents of $w=2143$ are not contained in the strict descents of $\alpha=(2,0,0,0)$.}

We recall a formula \cite[Theorem~5]{Reiner.Shimozono}:

\begin{Theorem}
\label{thm:RS}
Fix an increasing tableau $T$ with ${\tt content}(K_{-}^{0}(T))=\alpha$. Then
\[\kappa_{\alpha}=\sum_{({\bf a},{\bf i})}{\bf x}^{\bf i}\]
where the sum is over compatible sequences $({\bf a}, {\bf i})$
satisfying (cs.1), (cs.2), (cs.3) and ${\tt EGLS}({\bf a})=T$.
\end{Theorem}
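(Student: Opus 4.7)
The plan is to induct on $|\alpha| = \sum_i \alpha_i$, verifying that the right-hand side, which I denote $g_T$, satisfies the defining recurrence for $\kappa_\alpha$: namely $g_T = {\bf x}^\alpha$ when $\alpha$ is weakly decreasing, and $g_T = \pi_i(g_{T'})$ for a suitable $T'$ with ${\tt content}(K_-^0(T')) = \widehat\alpha$ whenever $\alpha_i < \alpha_{i+1}$.

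As a preliminary consistency check, observe that the aggregate identity already matches. By (\ref{eqn:BJS}), partitioning ${\tt Compatible}(w)$ according to the insertion tableau ${\tt EGLS}({\bf a})$ expresses
\[
{\mathfrak S}_w = \sum_{T \,:\, {\tt row}(T) = w \text{ reduced}} g_T,
\]
while Theorem~\ref{thm:LS} gives the parallel ${\mathfrak S}_w = \sum_T \kappa_{{\tt content}(K_-^0(T))}$ over the same index set. Thus $\sum_T (g_T - \kappa_{{\tt content}(K_-^0(T))}) = 0$, but this alone does not force term-by-term vanishing, since distinct $T$ can share nil-left-key content.

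For the base case, when $\alpha$ is weakly decreasing, I would check directly that $g_T = {\bf x}^\alpha$ by identifying the compatible pairs $({\bf a},{\bf i})$ surviving the constraint ${\tt EGLS}({\bf a}) = T$ and showing that they collectively contribute the single monomial ${\bf x}^\alpha$. For the inductive step, given an ascent $\alpha_i < \alpha_{i+1}$, choose $T'$ with ${\tt content}(K_-^0(T')) = \widehat\alpha$ and argue $\pi_i(g_{T'}) = g_T$ by partitioning the pairs contributing to $g_T$ into $s_i$-strings on ${\bf i}$: the weights along a string reassemble as $\pi_i$ applied to the weight of the string's anchor pair, and the anchors are placed in bijection with the pairs contributing to $g_{T'}$.

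The main obstacle is verifying this $s_i$-string decomposition, which demands careful control over how EGLS column insertion interacts with swapping adjacent values $i \leftrightarrow i+1$ in ${\bf i}$ (and the corresponding adjustments in ${\bf a}$ forced by (cs.2)). These moves are close to elementary Kashiwara crystal operators restricted to compatible pairs, and reproducing the details amounts to redoing the technical combinatorial heart of \cite{Reiner.Shimozono}. A conceptual alternative would be to identify the set $\{({\bf a},{\bf i}) : {\tt EGLS}({\bf a}) = T\}$ with a Demazure subcrystal of a type-$A$ highest-weight crystal and then deduce $g_T = \kappa_\alpha$ from the uniqueness of Demazure characters, at the cost of invoking crystal-theoretic machinery not otherwise needed in the paper.
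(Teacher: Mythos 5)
The paper does not actually prove this statement: it is recalled verbatim as \cite[Theorem~5]{Reiner.Shimozono} (``We recall a formula\dots''), so the paper's own ``proof'' is that citation. Your proposal instead outlines a from-scratch induction, and as written it is a plan with genuine gaps rather than a proof. The most basic problem is the induction quantity: the Demazure recurrence replaces $\alpha$ by the rearrangement $\widehat\alpha$, so $|\widehat\alpha|=|\alpha|$ and an induction on $|\alpha|$ never descends. You would need to induct on, say, the length of the permutation sorting $\alpha$ into weakly decreasing order.

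More seriously, the inductive step is not actually set up. You must produce a \emph{specific} increasing tableau $T'$ with ${\tt content}(K_{-}^0(T'))=\widehat\alpha$ canonically attached to $T$, and observe that ${\tt row}(T)$ and ${\tt row}(T')$ are in general reduced words for \emph{different} permutations: already for $\alpha=(0,1)$ versus $\widehat\alpha=(1,0)$ the tableaux are the single boxes containing $2$ and $1$ respectively, so the needed correspondence is not a restriction of any map inside one set ${\tt Compatible}(w)$, and the existence and uniqueness of the right $T'$ has to be argued. Finally, the $s_i$-string decomposition realizing $g_T=\pi_i(g_{T'})$ \emph{is} the technical content of the theorem; you acknowledge this and defer it to ``redoing the combinatorial heart of \cite{Reiner.Shimozono}'' or to an unconstructed Demazure-subcrystal structure on $\{({\bf a},{\bf i}):{\tt EGLS}({\bf a})=T\}$. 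Deferring the essential step leaves nothing proved. Your aggregate consistency check via (\ref{eqn:BJS}) and Theorem~\ref{thm:LS} is correct but, as you yourself note, cannot isolate individual tableaux sharing nil-left-key content. The honest options are to cite \cite[Theorem~5]{Reiner.Shimozono} as the paper does, or to carry out the string/crystal analysis in full (the crystal-operator route is essentially what is developed in \cite[Section~5]{Lenart}).
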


Two reduced words ${\bf a}$ and ${\bf a}'$ for the same permutation are in the same {\bf Coxeter-Knuth class} if
${\tt EGLS}({\bf a})={\tt EGLS}({\bf a}')=T$. 
This $T$ {\bf represents} the class. This equivalence relation
$\sim$  on reduced words is
defined by the symmetric and transitive closure of the relations:
\begin{eqnarray}
\label{eqn:therelns}
{\bf A}i(i+1)i{\bf B} & \sim  & {\bf A}(i+1)i(i+1){\bf B}\\ \nonumber
{\bf A}acb{\bf B} & \sim  & {\bf A}cab{\bf B}\\ \nonumber
{\bf A}bac{\bf B} & \sim & {\bf A}bca{\bf B} \nonumber
\end{eqnarray}
where $a<b<c$. In particular, it is true that ${\bf a} \sim {\tt row}({\tt EGLS}({\bf a}))$.

Restrict $\Phi$ to those $({\bf a},{\bf i})\in {\tt Compatible}(w[\alpha])$ such that
${\tt EGLS}({\bf a})=T[\alpha]$. Consider
$\Phi({\bf a},{\bf i})=[(T_1,U_1),\ldots,(T_k,U_k)]$. Since ${\tt EGLS}({\bf a}^{(i)})\sim {\tt row}(T_i)$, by (\ref{eqn:therelns}) we
see
\begin{equation}
\label{eqn:wesee}
{\tt row}(T_1)\cdots {\tt row}(T_k) \sim {\bf a}^{(1)}\cdots
{\bf a}^{(k)}={\bf a}.
\end{equation}
However, since we have assumed ${\tt EGLS}({\bf a})=T[\alpha]$, therefore:
\begin{equation}
\label{eqn:Treps}
{\tt EGLS}({\tt row}(T_1)\cdots {\tt row}(T_k))=T[\alpha],
\end{equation}
The other two requirements on $(T_1,\ldots,T_k)$
hold since $\Phi$ is well-defined.

Conversely, suppose $[(T_1,U_1),\ldots,(T_k,U_k)]$
has $(T_1,\ldots,T_k)$ satisfying Theorem~\ref{claim:main}'s conditions.
Since
$\Phi$ is a bijection,
$\Phi^{-1}([(T_1,U_1),\ldots,(T_k,U_k)])=({\bf a},{\bf i})\in {\tt Compatible}(w[\alpha])$.
Also, by (\ref{eqn:wesee}), ${\bf a}\sim {\tt row}(T_1)\cdots {\tt row}(T_k)$.
Now, we assumed (\ref{eqn:Treps}) holds. Hence, ${\tt EGLS}({\bf a})=T[\alpha]$ as desired. This completes the proof of the Theorem~\ref{claim:main}.\qed

\section{Additional remarks}

\subsection{Comments on Theorem~\ref{claim:main}}
Since $\kappa_{\alpha}$ specialize non-symmetric Macdonald polynomials (see, e.g., \cite[Section~5.3]{HHL}), can one extend Theorem~\ref{claim:main} in that direction?

Theorem~\ref{claim:main} implies that the key module of \cite[Section~5]{Reiner.Shimozono}
should have an action of $GL(d_1)\times GL(d_2-d_1)\times\cdots\times GL(d_k-d_{k-1})$ such that the character is
$\kappa_{\alpha}$.

V.~Reiner suggests 
a variation of Theorem~\ref{claim:main} using the plactic theory. 
The derivation should be similar, using formulas from \cite{Plactification}. 
However we are missing the analogue of 
\cite[Corollary~4]{BKTY}; cf.~\cite[Sections~7, 8]{four}.
Theorem~\ref{claim:main} naturally generalizes to Grothendieck polynomials, using \cite{BKTY:II,BKSTY};
details may appear elsewhere.

\subsection{$J_\alpha$'s form a (finite) basis of ${\sf Pol}$}
Clearly, $J_{\alpha}(X)={\bf x}^{\alpha}+\sum_{\beta\prec \alpha} c_{\beta}{\bf x}^{\beta}$.
One decomposes $f\in {\sf Pol}$ into a possibly infinite sum of $J_{\alpha}$'s:
\begin{equation}
\label{eqn:decomfin}
f=\sum_{\alpha} g_{\alpha}J_{\alpha}
\end{equation}
That is, find the $\prec$ largest
monomial ${\bf x}^{\theta_0}$ appearing in $f^{(0)}:=f$ (say with coefficient $c_{\theta_0}$) and let
$f^{(1)}:=f-c_{\theta_0}\cdot J_{\theta_t}$. Thus $f^{(1)}$ only contains monomials strictly smaller in the $\prec$ ordering.
Now repeat, defining $f^{(t+1)}:=f^{(t)}-c_{\theta_t}J_{\theta_t}$ where ${\bf x}^{\theta_t}$ is the $\prec$-largest
monomial appearing in $f^{(t)}$ etc.
Since $J_{\alpha}$ is not homogeneous, each step $t$ potentially introduces $\prec$-smaller monomials but of higher degree. However, we claim:
\begin{Proposition}
The expansion (\ref{eqn:decomfin}) is finite.
\end{Proposition}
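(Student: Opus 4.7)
The plan is to produce a fixed finite region $R\subset{\mathbb Z}_{\geq 0}^{\infty}$ of exponent vectors which contains the support of $f$ and is closed under the subtraction step, in the sense that each $J_{\theta_t}$ produced in the process is also supported in $R$. Granted this, every iterate $f^{(t)}$ has support in $R$, so the leading exponents $\theta_0\succ\theta_1\succ\cdots$ all lie in $R$ and are pairwise distinct; since $R$ is finite, the process must terminate, yielding only finitely many nonzero $g_{\alpha}$. Writing $n$ for the largest index appearing in a monomial of $f$ and $N$ for the largest exponent of any single variable occurring in $f$, I take
\[R=\{\gamma\in{\mathbb Z}_{\geq 0}^{\infty}:\gamma_i\leq N\text{ for all }i,\ \gamma_i=0\text{ for }i>n\},\]
which is finite.

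The crux, which I expect to be the main obstacle, is the following closure claim: for every $\alpha\in R$, every monomial ${\bf x}^{\beta}$ occurring in $J_{\alpha}$ satisfies $\beta\in R$. I will prove it by direct inspection of the $K$-Kohnert process starting from ${\tt Skyline}(\alpha)$. Two structural features of the rule suffice: (a) every $K$-Kohnert move keeps a ``$+$'' in its original row, and the ``$g$'' symbols are placed only at positions previously occupied by a ``$+$''; (b) every $K$-Kohnert move sends a ``$+$'' to a column weakly to the left of its current column. Since $\alpha\in R$ implies ${\tt Skyline}(\alpha)\subset\{1,\dots,n\}\times\{1,\dots,N\}$, observations (a) and (b) force every occupied cell of every reachable diagram $D$ to lie inside this same rectangle. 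Consequently each column weight $\beta_i$ of $D$ satisfies $\beta_i\leq N$ for all $i$ and $\beta_i=0$ for $i>n$, i.e., $\beta\in R$.

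With the closure claim in hand, a short induction finishes the proof. One has $f^{(0)}=f\in{\mathbb Z}\langle{\bf x}^{\gamma}:\gamma\in R\rangle$, so its $\prec$-leading exponent $\theta_0$ lies in $R$; closure gives $J_{\theta_0}\in{\mathbb Z}\langle{\bf x}^{\gamma}:\gamma\in R\rangle$, hence $f^{(1)}=f^{(0)}-c_{\theta_0}J_{\theta_0}$ does as well. Iterating, $\theta_t\in R$ whenever $f^{(t)}\neq 0$. Because the $\theta_t$ are strictly $\prec$-decreasing elements of the finite set $R$, the sequence must terminate, giving the desired finite expansion.
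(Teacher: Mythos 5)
Your proposal is correct and takes essentially the same approach as the paper: both arguments hinge on the observation that $K$-Kohnert moves are horizontal and leftward (and $g$'s are left at previously occupied cells), so all diagrams reachable from ${\tt Skyline}(\alpha)$ stay inside the bounding rectangle of $\alpha$, making the set of exponent vectors that can appear finite and the $\prec$-decreasing process terminating. You simply spell out the bounding rectangle $R$ and the closure-plus-induction bookkeeping that the paper's terse proof leaves implicit.
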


\begin{proof}
By the $K$-Kohnert rule, 
each $\beta$ that appears in $J_{\alpha}$ is contained in the smallest
rectangle $R$ that
contains $\alpha$. So the above procedure only involves the finitely many 
diagrams contained in $R$ for one of the finitely many
initial $\alpha\in{\mathbb Z}_{\geq 0}^{\infty}$ such that ${\bf x}^{\alpha}$ is in $f$. \end{proof}

\subsection{More on the interplay of Grothendieck and the $\Omega$ polynomials}

M.~Shimozono has suggested that the expansion of ${\mathfrak G}_w$ into $\Omega_{\alpha}$
should alternate in sign, by degree. 
An explicit rule exhibiting this has been conjectured by V.~Reiner and the second author.

\section*{Acknowledgements}
AY thanks Jim Haglund, Alain Lascoux, Mark Shimozono
and Vic Reiner for inspiring discussions and correspondence. AY also thanks Oliver Pechenik and Luis Serrano for helpful comments. 
This project was initiated during a summer undergraduate research experience at UIUC supported by NSF grant
DMS 0901331. AY also was supported by NSF grant DMS 1201595 and a Helen Corley Petit endowment at UIUC.

\end{document}